%------------------------------------------------------------------------------
% Beginning of journal.tex
%------------------------------------------------------------------------------
%
% AMS-LaTeX version 2 sample file for journals, based on amsart.cls.
%
%        ***     DO NOT USE THIS FILE AS A STARTER.      ***
%        ***  USE THE JOURNAL-SPECIFIC *.TEMPLATE FILE.  ***
%
% Replace amsart by the documentclass for the target journal, e.g., tran-l.
%
\documentclass{amsart}

\newtheorem{theorem}{Theorem}[section]
\newtheorem{lemma}[theorem]{Lemma}

\theoremstyle{definition}
\newtheorem{definition}[theorem]{Definition}

\theoremstyle{remark}

\numberwithin{equation}{section}

%    Absolute value notation

%    Blank box placeholder for figures (to avoid requiring any
%    particular graphics capabilities for printing this document).

\begin{document}

\title{MAXIMIZING THE FIRST EIGENVALUE OF THE JACOBI OPERATOR}

%    Information for first author
\author{J. Fabio B. Montenegro}
%    Address of record for the research reported here
\address{Department of Mathematics, Universidade Federal do Cear\'a, Fortaleza, 60175-020, Cear\'a}
\email{fabio@mat.ufc.br}
%    \thanks will become a 1st page footnote.
%\thanks{The first author was supported in part by NSF Grant \#000000.}

%    Information for second author
\author{F. Damiana Vieira}
\address{Department of Mathematics, Universidade Federal do Cariri, Brejo Santo 63260-000, Cear\'a}
\email{damiana.vieira@ufca.edu.br}
%\thanks{The work was done with the support of the Coordenação de Aperfeiçoamento de Pessoa de Nível Superior-Brasil (CAPES) - Financing Code 001.}

%    General info
\subjclass[2000]{Primary 54C40, 14E20; Secondary 46E25, 20C20}

%\date{January 1, 2001 and, in revised form, June 22, 2001.}

\keywords{First eigenvalue, Schrödinger operator, Willmore functional}

\begin{abstract}
We consider the Jacobi operator,
defined on a closed oriented hypersurfaces immersed in the Euclidean space with the same volume of the unit sphere. We show a local generalization for the classical result of the Willmore functional for the Euclidean sphere. As a consequence, we prove that the first eigenvalue of the Jacobi operator in the Euclidean sphere is a local maximum and this result is a global one in the closed oriented surfaces space of $\mathbb{R}^3$ and genus zero.
\end{abstract}

\maketitle

\section*{Introduction}

Let $M^n$ be a closed oriented hypersurface immersed in $\mathbb{R}^{n+1}$. Consider the differential operator
\begin{equation}
\label{Oprator1}
L=-\Delta-|II|^2, 
\end{equation}
where $\Delta$ is the Laplace-Beltrami operator and $|II|^2=\sum_{j=1}^{n}k_j^2$ is the squared norm of the second fundamental form of $M$. This operator arise naturally  in the study of stability of geometric problems, as known as Jacobi operator, observed in the works of Barbosa and do Carmo \cite{B},  Barbosa, do Carmo and Eschenburg \cite{C}, by appearing in the calculation to the second variation of the volume at $M$. It is also found in the works of Harrel II \cite{E}, Papanicolaou \cite{G} and Harrell II and Loss \cite{F} that deal with partial and total proof of the Alikakos-Fusco conjecture proposed in \cite{A}. This conjecture is related to a physical problem involving stability of interfacial surfaces, in which the authors realized that the instability of these surfaces was associated with the negative eigenvalues of the Laplace-Beltrami operator $L$. Since the first eigenvalue of $L$ is always negative, all attention was restricted on the second eigenvalue, where stability and characterization theorems were obtained, as it is presented by Harrell and Loss in \cite{F}, where Alikakos-Fusco conjecture follows as a particular case. In order to present this result, we are denoting by $\mathbf{H}=\sum_{j=1}^{n}k_j$ the mean curvature non-normalized of $M$.

\begin{theorem}[\cite{F}] 
Let $\Omega$ be a smooth compact oriented hypersurface of dimension $n$ immersed in $\mathbb{R}^{n+1};$ in particular self-intersections are allowed. The metric on that surface is the standard Euclidean metric inherited from $\mathbb{R}^{n+1}$. Then the second eigenvalue $\lambda_2$ of the operator
\begin{equation}
\label{Oprator2}
\mathcal{L}=-\Delta-\displaystyle\frac{1}{n}\; \mathbf{H}^2
\end{equation}
is strictly negative unless $\Omega$ is a sphere, in which case $\lambda_2$ equals zero.
\end{theorem}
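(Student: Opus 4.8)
The plan is to run a Rayleigh--Ritz argument using the ambient coordinate functions as trial functions. Write $Q(f)=\int_M\bigl(|\nabla f|^2-\frac1n\mathbf H^2 f^2\bigr)\,dV$ for the quadratic form associated with $\mathcal L$ and $R(f)=Q(f)/\int_M f^2\,dV$ for its Rayleigh quotient. Since $\mathcal L$ is a Schr\"odinger operator on a closed manifold, its ground state $\phi_1$ may be taken strictly positive, so the second eigenvalue admits the clean variational description
\begin{equation*}
\lambda_2=\min\Bigl\{R(f): f\in H^1(M)\setminus\{0\},\ \textstyle\int_M \phi_1 f\,dV=0\Bigr\}.
\end{equation*}
Hence to prove $\lambda_2\le 0$ it suffices to produce a single admissible trial function $f\perp\phi_1$ with $Q(f)\le 0$.

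First I would balance the immersion by a translation. Because $\phi_1>0$ we have $\int_M\phi_1\,dV>0$, so translating $X\mapsto X-c$ with $c_i=\int_M\phi_1 x_i\,dV\big/\int_M\phi_1\,dV$ makes every coordinate function orthogonal to $\phi_1$; after this each $x_i$ is admissible, giving $\lambda_2\int_M x_i^2\,dV\le Q(x_i)$. Summing over $i$ and using the pointwise identities $\sum_i|\nabla x_i|^2=n$ (the tangential projections of an orthonormal frame of $\mathbb R^{n+1}$) and $\sum_i x_i^2=|X|^2$ yields
\begin{equation*}
\lambda_2\int_M |X|^2\,dV\le n\,\mathrm{Vol}(M)-\frac1n\int_M\mathbf H^2|X|^2\,dV.
\end{equation*}
Thus $\lambda_2\le0$ reduces to the sharp geometric inequality $\int_M\mathbf H^2|X|^2\,dV\ge n^2\,\mathrm{Vol}(M)$ for the balanced immersion.

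The heart of the matter, and the step I expect to be the main obstacle, is precisely this scale-invariant inequality together with its equality case. On the round sphere centered at the origin it is an identity (there $\mathbf H=n$, $|X|=1$, and $\phi_1$ is constant so the balancing point is the center), which is exactly the source of $\lambda_2=0$ in the spherical case. For a general $M$ I would exploit the conformal invariance underlying the Willmore functional: if a translation alone does not place the origin where the inequality holds, one enlarges the balancing group to the M\"obius transformations of $\mathbb R^{n+1}\cup\{\infty\}$ and invokes a Hersch--Li--Yau type degree argument to achieve $x_i\perp\phi_1$ and the correct normalization simultaneously, reducing the desired bound to a Willmore-type energy estimate whose minimizer is the round sphere.

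Finally, for the rigidity statement, $\lambda_2=0$ forces equality throughout the chain above; in particular each $x_i$ must satisfy $\mathcal L x_i=0$, that is $\mathbf H N_i=\frac1n\mathbf H^2 x_i$ via $\Delta x_i=-\mathbf H N_i$. Where $\mathbf H\neq0$ this gives $N=\frac{\mathbf H}{n}X$, so the position vector is everywhere normal, $|X|$ is constant, and $M$ is a round sphere centered at the balancing origin; a continuity argument rules out the degenerate locus $\mathbf H=0$ since a closed hypersurface of $\mathbb R^{n+1}$ cannot be minimal. The delicate points are thus the sharp inequality and the verification that its equality case propagates back to the condition $N\parallel X$.
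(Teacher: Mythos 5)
You should first note that the paper does not prove this statement at all: it is quoted verbatim from Harrell--Loss \cite{F} and used as a black box, so there is no ``paper's own proof'' to compare against; I am therefore comparing your proposal with the argument in \cite{F}.

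Your setup is exactly right and matches \cite{F}: positivity of the ground state $\phi_1$, balancing the immersion by a translation so that $\int_M\phi_1 x_i\,dV=0$, testing $\lambda_2$ with the coordinate functions, and summing to reduce everything to the scale-invariant inequality $\int_M\mathbf H^2|X|^2\,dV\ge n^2\,\mathrm{Vol}(M)$. The genuine gap is that you stop there: you correctly identify this inequality as ``the heart of the matter'' but do not prove it, and the route you sketch --- enlarging the balancing group to M\"obius transformations and invoking a Hersch--Li--Yau degree argument together with a ``Willmore-type energy estimate whose minimizer is the round sphere'' --- would not close the gap. Such Willmore-type estimates are not available for general immersed hypersurfaces in dimension $n>2$ (this is essentially the open conjecture the present paper itself is circling around), and in any case no conformal machinery is needed. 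The inequality is elementary: integrating the identity $\Delta|X|^2=2n+2\mathbf H\langle X,N\rangle$ over the closed hypersurface gives the Minkowski--Hsiung formula $\int_M\mathbf H\langle X,N\rangle\,dV=-n\,\mathrm{Vol}(M)$ (valid for every choice of origin, so compatible with your balancing translation), and then Cauchy--Schwarz yields
\begin{equation*}
n^2\,\mathrm{Vol}(M)^2=\Bigl(\int_M\mathbf H\,\langle X,N\rangle\,dV\Bigr)^{2}\le\Bigl(\int_M\mathbf H^2|X|^2\,dV\Bigr)\Bigl(\int_M\frac{\langle X,N\rangle^2}{|X|^2}\,dV\Bigr)\le\Bigl(\int_M\mathbf H^2|X|^2\,dV\Bigr)\mathrm{Vol}(M),
\end{equation*}
since $\langle X,N\rangle^2\le|X|^2$ pointwise. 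This also repairs your rigidity argument, which currently hand-waves past the locus $\mathbf H=0$: equality forces $\langle X,N\rangle^2=|X|^2$ everywhere, i.e.\ $N=\pm X/|X|$, hence the tangential part of $X$ vanishes, $|X|$ is constant, and $M$ is a round sphere --- no separate continuity argument or appeal to nonexistence of closed minimal hypersurfaces is required.
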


We can look at this theorem as a characterization of a round sphere. So we decided to investigate if this also happens for the first eigenvalue. That is, among all the orientable compact hypersurfaces immersed in Euclidean space, with the same volume as the unit sphere, which one has the highest first eigenvalue?
It is reasonable to expect such a hypersurface be a sphere, but we only get to prove it in the case of surface, see Theorem \ref{FirstTheorem}, and we get a partial result for higher dimensions, see Theorem \ref{SecondTheorem}.
It is important to note that the results that we have obtained regarding the operator $\mathcal{L}$, defined in (\ref{Oprator2}), also apply to Jacobi operator $L$ in (\ref{Oprator1}), since $(1/n)\mathbf{H}^2\leq |II|^2$, with equality occurring in the Euclidean sphere.

In development of the results, we use the Willmore energy functional associated to the closed orientable, smooth surface $S\subset\mathbb{R}^3$, $W:\mathcal{F} \longrightarrow \mathbb{R}$ defined by	
$$
W(f)=\frac{1}{2\pi}\displaystyle \int_{f(S)}\!\! \mathrm{H^2}\, dS, \label{FW}
$$
where  $\mathcal{F}$ is the space of all $\mathcal{C}^{\infty}$-embedding $f:S\rightarrow \mathbb{R}^3$, $H=(k_1+k_2)/2$ is the mean curvature of $f(S)$ consider as a hypersurface of $\mathbb{R}^3$. Here $k_1$,  $k_2$ are the two classical principal curvatures of the surface $f(S)$ and $dS$ is the area element of the induced metric on $S$.

The functional $W$ was first studied by W. Blaschke \cite{Bl} and G. Thomsen \cite{T}, who established the most important property of $W$: The functional $W$ is invariant under conformal changes of metric of the ambient space $\mathbb{R}^3$. 
It appears naturally in other areas of knowledge, such as the study of: elastic shells \cite{D}, \cite{I} and cell membranes \cite{J}. Among mathematics it became well known by the famous Willmore Conjecture, proposed in \cite{L} and proved by Cod\'a and Neves in \cite {H}.

In \cite{L}, Willmore proved that among all embedded surfaces in $\mathbb{R}^3$, the energy functional attains its minimum in the Euclidean sphere. As a consequence, we prove that the first eigenvalue of the $\mathcal{L}$ operator attains its global maximum in the sphere $\mathbb{S}^2$.
Also, for $n>2$ we will proceed similar to Papanicolaou in \cite{G}, by giving local results in the sense that $M$ is a sufficiently small perturbation of the unity sphere $\mathbb{S}^n$, with the same volume of the unity sphere. So we prove that Willmore functional attains, at sphere $\mathbb{S}^n$, a local minimum and, consequently, the first eigenvalue of the operator $\mathcal{L}$ attains, at sphere $\mathbb{S}^n$, a local maximum.

\section{Variations for hypersurfaces of constant volume}\label{Var}

Let $\xi:M \rightarrow \mathbb{R}^{n+1}$ be an immersion of an orientable, n-dimensional differentiable manifold into $\mathbb{R}^{n+1}$. A variation of $\xi$ is a differentiable application
	\begin{equation*}
	X:(-\varepsilon,\varepsilon)\times M\rightarrow \mathbb{R}^{n+1}
	\end{equation*} 
such that, for all $t\in(-\varepsilon,\varepsilon)$ and  $p\in M$ , the map $X_t:M^n\rightarrow \mathbb{R}^{n+1}$ defined by $X_t(p)=X(t,p)$, is an isometric immersion, with $X_0=\xi .$

Let  $vol(M)$ be the n-volume of $M$ in the induced metric of $\mathbb{R}^{n+1}$. We will say that a variation is volume-preserving  if $vol(M_t)=vol(M)$, for all $t\in (-\varepsilon,\varepsilon)$, where $M_t=X_t(M)$.

For each  $X_t$ immersion, the coefficient of the metric tensor corresponding will be denoted by $\mathbf{g}_{ij}=\mathbf{g}_{ij}(t)$ and $g_{ij}=\mathbf{g}_{ij}(0)$, by $\mathbf{g}=\mathrm{det}(\mathbf{g}_{ij})$ where det denotes the determinant, the second fundamental form by $\mathbf{h}_{ij}=\mathbf{h}_{ij}(t)$, and its trace will be $\mathbf{H}$. We will make frequent use of the following lemma.

\begin{lemma}
	Let $f:\mathbb{S}^n\rightarrow\mathbb{R}$ be a smooth function. Then there exists a volume-preserving variation of $Id:\mathbb{S}^n\to\mathbb{S}^n\subset \mathbb{R}^{n+1}$,
$X:(-\varepsilon ,\varepsilon )\times \mathbb{S}^n \to \mathbb{R}^{n+1}$
given by
\begin{equation}
\label{X_t}
X(t,p)=(1+t\, f(p)+\varphi(t))p,
\end{equation}
where $\varphi:(-\varepsilon,\varepsilon)\to(-\delta,\delta)$ is a smooth function obtained by the Implicit Function Theorem, such that $\varphi(0)=0$. Furthermore,  $\varphi'(0)=0$ if $f$ have zero average.
\end{lemma}

\begin{proof}
Consider the family of embedding
$X:(-\varepsilon ,\varepsilon )\times \mathbb{S}^n \to \mathbb{R}^{n+1}$ given by
$$
X(t,s,p)=(1+t\, f(p)+s)p,
$$
for $t$ and $s$ sufficiently small.

Let $\Phi:U\subset\mathbb{R}^n\rightarrow\mathbb{S}^n\subset \mathbb{R}^{n+1}$ be a parametrization of the unit sphere such that $\Phi(U)=\mathbb{S}^n-\{p_0\}$, $p_0\in\mathbb{S}^n$. For simplicity of notation, we write $f(x)$ instead of $f(\Phi (x))$, for $x\in U$. 

\hspace{-0.45cm}The volume of $X_{(t,s)}(\mathbb{S}^n)$ is given by
$$
vol(t,s)=\int_U\sqrt{g(t,s)}\;dx.
$$
Then,
$$
\frac{\partial vol}{\partial s}(t,s)=\int_U\frac{\partial}{\partial s}\left( \sqrt{g(t,s)}\right)dx
$$
and, for $t=s=0$, we have
$$
\frac{\partial vol}{\partial s}(0,0)=n\, \int_U \sqrt{g}\, dx=n\, vol(\mathbb{S}^n)>0.
$$
By the Implicit Function Theorem applied the function $(t,s)\mapsto vol(t,s)$, there exist a neighborhood $(-\varepsilon , \varepsilon )\times (-\delta , \delta )$ of the origin $(0,0)$ and a smooth function $\varphi :(-\varepsilon , \varepsilon )\to
(-\delta , \delta )$ with $\varphi (0)=0$, satisfying
$vol(t,\varphi (t))=vol(0,0),\;\; \forall \, t\in (-\varepsilon , \varepsilon )$
and
\begin{equation}
\label{varphit0}
\varphi '(0)=-\frac{\left(\partial vol/ \partial t\right)(0,0)}{\left(\partial vol / \partial s\right)(0,0)}=
-\frac{1}{ vol(\mathbb{S}^n)}\int_{\mathbb{S}^n}f\mathrm{d}\mathbb{S}^n.
\end{equation}
Therefore, the variation
$$
X(t,p)=(1+t\, f(p)+\varphi(t))p
$$
is volume-preserving and $\varphi'(0)=0$ if $\displaystyle\int_{\mathbb{S}^n}f=0$.
\end{proof}
The following lemmas will be essential for the proof of some theorems of this paper.
  
\begin{lemma}
\label{lemma2}
For each $t\in(-\varepsilon,\varepsilon)$ we associate the immersion $X_t$ definite in (\ref{X_t}), with $f$ having zero average, and consequently a embedded hypersurface $M_t$ at $\mathbb{R}^{n+1}$, with the same volume of $M_0=\mathbb{S}^n.$ For each hypersurface we denote by $\mathbf{H}=\mathbf{H}(t)$ the mean curvature non-normalized associated a inherited metric $\mathbf{g}_{ij}=\mathbf{g}_{ij}(t)$. Then 
\begin{equation}
\label{sqrtGt0}
\frac{\partial\sqrt{\mathbf{g}}}{\partial t}\Big|_{t=0}=nf\sqrt{{g}}
\end{equation}
\begin{equation}
\label{Ht0}
\frac{\partial \mathbf{H}}{\partial t}\Big|_{t=0}=-nf-\Delta f 
\end{equation}
\end{lemma}
\begin{proof}
Let $X(t,x)=(1+t\, f(x)+\varphi(t))\Phi(x)$ be a parametrization of the $M_t$, where  $\Phi:U\subset\mathbb{R}^n\rightarrow\mathbb{S}^n\subset \mathbb{R}^{n+1}$ be a parametrization of the unit sphere. Then
$$
\frac{\partial X}{\partial x_i}=t\frac{\partial f}{\partial x_i}\Phi +(1+tf+\varphi(t))\frac{\partial \Phi}{\partial x_i}
$$
where we use the simplified notation $f=f\circ \Phi$. One easily checks that
\begin{equation}
\label{gij}
\mathbf{g}_{ij}= t^2\frac{\partial f}{\partial x_i}\frac{\partial f}{\partial x_j}+{(1+tf+\varphi(t))}^{2}{g}_{ij}
\end{equation}
Notice that $\mathrm{det}(\mathbf{g}_{ij})=\mathrm{det}(g_{ij})+ O(t)$. Hence, for $t$ sufficiently small $\mathbf{g}_{ij}(t)$ is a Riemannian metric.

Deriving the metric coefficients given in (\ref{gij}) we have
\begin{equation}
\label{gijt}
\frac{\partial \mathbf{g}_{ij}}{\partial t}
=2t\frac{\partial f}{\partial x_i}\frac{\partial f}{\partial x_j}\\
+2(1+tf+\varphi(t) )(f+\varphi'(t)){g}_{ij}
\end{equation}
and
\begin{equation}
\label{gijt0}
\frac{\partial \mathbf{g}_{ij}}{\partial t}\Big|_{t=0}=2f{g}_{ij}.
\end{equation}

We will also need to derive the inverse metric coefficients.
To this end we use that 
$$
\sum_{j=1}^{n}\mathbf{g}^{ij}\mathbf{g}_{jk}={\delta^{i}_{k}}.
$$
Then
$$
\sum_{j=1}^{n}\frac{\partial \mathbf{g}^{ij}}{\partial t}\mathbf{g}_{jk}= -
\sum_{j=1}^{n}\mathbf{g}^{ij}\frac{\partial \mathbf{g}_{jk}}{\partial t}.
$$
that is, by (\ref{gijt})
\begin{equation}
\label{ijgt}
\frac{\partial \mathbf{g}^{ij}}{\partial t}
=\bigg(\!\!-2t\!\sum_{p,r=1}^{n} \frac{\partial f}{\partial x_p}\frac{\partial f}{\partial x_r}-2(1+tf+\varphi(t) )(f+\varphi'(t))g_{pr}\bigg)\mathbf{g}^{ip}\mathbf{g}^{rj}
\end{equation}
and
\begin{equation}
\label{ijgt0}
\frac{\partial \mathbf{g}^{ij}}{\partial t}\Big|_{t=0}=-2f{g^{ij}}
\end{equation}

We are now able to derive the square root of the metric determinant. We have 
\begin{equation}
\label{sqrtGt}
\frac{\partial}{\partial t}\sqrt{\mathbf{g}}=\frac{1}{2}\bigg(\displaystyle\sum_{i,j=1}^{n}\mathbf{g}^{ij}\;\frac{\partial \mathbf{g}_{ij}}{\partial t}\bigg)\sqrt{\mathbf{g}}.
\end{equation}
Then by (\ref{gijt0})
$$
\frac{\partial\sqrt{\mathbf{g}}}{\partial t}\Big|_{t=0}=
n\, f\sqrt{g}.
$$

I remains to prove (\ref{Ht0}) and to this end we note that
\begin{equation}
\label{N.Xxi}
\bigg\langle \mathbf{N},\frac{\partial X }{\partial x_i}\bigg\rangle =
\bigg\langle \frac{\partial \mathbf{N} }{\partial t}, \mathbf{N}\bigg\rangle=0
\end{equation}
Then,
$$
\bigg\langle \frac{\partial \mathbf{N}}{\partial t},\frac{\partial X }{\partial x_i}\bigg\rangle = 
-\bigg\langle \mathbf{N},\frac{\partial^2 X }{\partial t\partial x_i}\bigg\rangle
$$
and
\begin{equation}
\label{Nt}
\frac{\partial\mathbf{N}}{\partial t}=
-\langle \mathbf{N},\Phi\rangle\sum_{i,j=1}^n\mathbf{g}^{ij}\frac{\partial f}{\partial x_i}\frac{\partial X}{\partial x_j}-(f+\varphi'(t))\sum_{i,j=1}^n\mathbf{g}^{ij}\bigg\langle\mathbf{N},\frac{\partial \Phi}{\partial x_i}\bigg\rangle\frac{\partial X}{\partial x_j}
\end{equation}
For $t=0$,
\begin{equation}
\label{Nt0}
\frac{\partial \mathbf{N}}{\partial t}\Big|_{t=0}=-\mathrm{grad}f
\end{equation}

We also need to derive the coefficients of the second fundamental form.
By definition $\mathbf{h}_{ij}=\langle \partial X /\partial x_i,\partial N/\partial x_j\rangle$. Thus, by (\ref{N.Xxi}), we have
$$
\mathbf{h}_{ij}=-\bigg\langle \frac{\partial^2 X}{\partial x_i\partial x_j} ,\mathbf{N}\bigg\rangle .
$$
So,
\begin{equation}
\label{hijt}
\frac{\partial \mathbf{h}_{ij}}{\partial t}=
-\bigg\langle \frac{\partial^3 X}{\partial t\partial x_i\partial x_j} ,\mathbf{N} \bigg\rangle
-\bigg\langle \frac{\partial^2 X}{\partial x_i\partial x_j} ,\frac{\partial \mathbf{N}}{\partial t}\bigg\rangle
\end{equation}
For $t=0$,
$$
h_{ij}=\mathbf{h}_{ij}(0)=-\bigg\langle \frac{\partial^2 \Phi}{\partial x_i\partial x_j} ,\Phi\bigg\rangle=g_{ij}
$$
and
\begin{equation}
\label{hijt0}
\frac{\partial \mathbf{h}_{ij}}{\partial t}\Big|_{t=0}=fg_{ij}-\frac{\partial^2f}{\partial x_i\partial x_j}
+\sum_{k=1}^n\Gamma_{ij}^k\frac{\partial f}{\partial x_k}
\end{equation}
where
$$
\Gamma_{ij}^k
=\sum_{l=1}^ng^{kl}\bigg\langle\frac{\partial^2 \Phi}{\partial x_i\partial x_j},\frac{\partial \Phi}{\partial x_l}\bigg\rangle 
$$ 
are the Christoffel symbols.

Finally, as $\mathbf{H}=\displaystyle\sum_{i,j=1}^{n}\mathbf{g}^{ij}\mathbf{h}_{ij}$ we have
\begin{equation}
\label{Ht}
\frac{\partial\mathbf{H}}{\partial t}=\sum_{i,j=1}^{n}\frac{\partial\mathbf{g}^{ij}}{\partial t}\mathbf{h}_{ij}
+\sum_{i,j=1}^{n}\mathbf{g}^{ij}\frac{\partial\mathbf{h}_{ij}}{\partial t}
\end{equation}
and, by (\ref{ijgt0}) and (\ref{hijt0})
$$
\frac{\partial\mathbf{H}}{\partial t}\Big|_{t=0}=-nf-\Delta f.
$$
Here we use that
$$
\Delta f=\sum_{i,j=1}^ng^{ij}\frac{\partial^2f}{\partial x_i\partial x_j}
-\sum_{i,j,k=1}^ng^{ij}\Gamma_{ij}^k\frac{\partial f}{\partial x_k}\;.
$$
\end{proof}

\begin{lemma}	
\label{lemma3}
In the same hypotheses as the previous lemma we have
\begin{equation}
\label{sqrtG2t0}			
\frac{\partial^2\sqrt{\mathbf{g}}}{\partial t^2}\Big|_{t=0}=\bigg(|\mathrm{grad} f|^2+(n^2-n)f^2+n\varphi''(0)\bigg)\sqrt{{g}}
\end{equation}
\begin{equation}
\label{H2t0}		
\frac{\partial^2\mathbf{H}}{\partial t^2}\Big|_{t=0}=
(2-n)|\mathrm{grad}f|^2+2nf^2-n\varphi''(0)+4f\Delta f
\end{equation}
\end{lemma}
\begin{proof}

Deriving the expression in (\ref{gijt}) we have
\begin{equation}
\label{gij2t}
\frac{\partial^2\mathbf{g}_{ij}}{\partial t^2}=2\frac{\partial f}{\partial x_i}\frac{\partial f}{\partial x_j}+2(f+\varphi'(t) )^2g_{ij}
+2(1+tf+\varphi(t) )\varphi''(t) g_{ij}
\end{equation}
For $t=0$
\begin{equation}
\label{gij2t0}
\frac{\partial^2\mathbf{g}_{ij}}{\partial t^2}\Big|_{t=0}=2\,\frac{\partial f}{\partial x_i}\frac{\partial f}{\partial x_j}
+\Big(2f^2+2\varphi''(0)\Big)g_{ij}
\end{equation}

Deriving the expression in (\ref{ijgt}) we have
\begin{equation}
\label{ijg2t}
\frac{\partial^2\mathbf{g}^{ij}}{\partial t^2}=
-2\sum_{r,p=1}^{n}\left( t\frac{\partial f}{\partial x_p}\frac{\partial f}{\partial x_r}
+(1+tf+\varphi(t) )(f+\varphi'(t) )g_{pr}\right)\frac{\partial\mathbf{g}^{ip}}{\partial t}\mathbf{g}^{rj}
\end{equation}
$$
-2\sum_{r,p=1}^{n}\left( t\frac{\partial f}{\partial x_p}\frac{\partial f}{\partial x_r}
+(1+tf+\varphi(t) )(f+\varphi'(t) )g_{pr}\right)\frac{\partial\mathbf{g}^{rj}}{\partial t}\mathbf{g}^{ip}
$$
$$
-2\sum_{r,p=1}^{n}\left( \frac{\partial f}{\partial x_p}\frac{\partial f}{\partial x_r}
+(f+\varphi'(t))^2g_{pr}+(1+tf+\varphi(t) )\varphi''(t) g_{pr}\right)\mathbf{g}^{ip}\mathbf{g}^{rj}
$$
Then by (\ref{ijgt0}),
$$
\frac{\partial^2\mathbf{g}^{ij}}{\partial t^2}\Big|_{t=0}
=-2\sum_{r,p=1}^{n} \frac{\partial f}{\partial x_p}\frac{\partial f}{\partial x_r}g^{ip}g^{rj}
+\left(6f^2-2\varphi''(0)\right)g^{ij}
$$
Now we observe that
$$
\sum_{i,j=1}^n\left(\sum_{r,p=1}^{n} \frac{\partial f}{\partial x_p}\frac{\partial f}{\partial x_r}g^{ip}g^{rj}\right)g_{ij}=|\mathrm{grad}f|^2=
\sum_{i,j=1}^n\left(\frac{1}{n}|\mathrm{grad}f|^2g^{ij}\right)g_{ij}
$$
Then
$$
\sum_{r,p=1}^{n} \frac{\partial f}{\partial x_p}\frac{\partial f}{\partial x_r}g^{ip}g^{rj}=\frac{1}{n}\,|\mathrm{grad}f|^2g^{ij}
$$
and
\begin{equation}
\label{ijg2t0}
\frac{\partial^2\mathbf{g}^{ij}}{\partial t^2}\Big|_{t=0}=
\left(-\frac{2}{n}\,|\mathrm{grad}f|^2+6f^2-2\varphi''(0)\right)g^{ij}
\end{equation}

Deriving the expression in (\ref{sqrtGt}) we have
\begin{equation}
\label{sqrtG2t}
\frac{\partial^2\sqrt{\mathbf{g}}}{\partial t^2}
=\frac{1}{2}\bigg(\sum_{i,j=1}^{n}\frac{\partial \mathbf{g}^{ij}}{\partial t}\;
\frac{\partial\mathbf{g}_{ij}}{\partial t}\bigg)\sqrt{\mathbf{g}}
\end{equation}
$$
+\frac{1}{2}\bigg(\sum_{i,j=1}^{n}\mathbf{g}^{ij}\;
\frac{\partial^2\mathbf{g}_{ij}}{\partial t^2}\bigg)\sqrt{\mathbf{g}}
+\frac{1}{2}\bigg(\sum_{i,j=1}^{n}\mathbf{g}^{ij}\;
\frac{\partial\mathbf{g}_{ij}}{\partial t}\bigg)\frac{\partial\sqrt{\mathbf{g}}}{\partial t}	
$$
Then by (\ref{gijt0}), (\ref{ijgt0}), (\ref{gij2t0}) and (\ref{sqrtGt0})
$$
\frac{\partial^2}{\partial t^2}\sqrt{\mathbf{g}}\, \Big|_{t=0}
=\left( |\mathrm{grad}f|^2+(n^2-n)f^2+n\varphi''(0)\right)\sqrt{g}
$$

Now we observe that 
$$
0=\frac{\partial^2}{\partial t^2}\left\langle N,N\right\rangle=
2\bigg\langle \frac{\partial^2 N}{\partial^2 t},N\bigg\rangle
+2\Big| \frac{\partial N}{\partial t}\Big|^2
$$
Then
$$
\bigg\langle \frac{\partial^2 N}{\partial^2 t}\Big|_{t=0},\Phi\bigg\rangle
=-\Big| \frac{\partial N}{\partial t}\Big|_{t=0}\Big|^2
=-|\mathrm{grad}f|^2
$$
On the other hand, we have that
$$
0=\frac{\partial^2}{\partial t^2}\bigg\langle N,\frac{\partial X}{\partial x_i}\bigg\rangle
=\bigg\langle \frac{\partial^2N}{\partial t^2},\frac{\partial X}{\partial x_i}\bigg\rangle
+2\bigg\langle \frac{\partial N}{\partial t},\frac{\partial^2 X}{\partial t\partial x_i}\bigg\rangle
+\bigg\langle N,\frac{\partial^3 X}{\partial t^2\partial x_i}\bigg\rangle
$$
that is,
$$
\bigg\langle \frac{\partial^2N}{\partial t^2}\Big|_{t=0},\frac{\partial \Phi}{\partial x_i}\bigg\rangle
=-2\bigg\langle \frac{\partial N}{\partial t}\Big|_{t=0},\frac{\partial^2 X}{\partial t\partial x_i}\Big|_{t=0}\bigg\rangle
-\bigg\langle \Phi,\frac{\partial^3 X}{\partial t^2\partial x_i}\Big|_{t=0}\bigg\rangle
$$
By (\ref{Nt}) we have
$$
\bigg\langle \frac{\partial^2N}{\partial t^2}\Big|_{t=0},\frac{\partial \Phi}{\partial x_i}\bigg\rangle
=2f\frac{\partial f}{\partial x_i}
$$
Therefore
\begin{equation}
\label{N2t0}
\frac{\partial^2N}{\partial t^2}\Big|_{t=0}=\bigg\langle\frac{\partial^2N}{\partial t^2}\Big|_{t=0},\Phi\bigg\rangle\Phi
+\sum_{k,l=1}^ng^{kl}\bigg\langle\frac{\partial^2N}{\partial t^2}\Big|_{t=0},\frac{\partial \Phi}{\partial x_k}\bigg\rangle\frac{\partial \Phi}{\partial x_l}
\end{equation}
$$
=-|\mathrm{grad}f|^2\Phi+2f\mathrm{grad}f
$$

By (\ref{hijt})) we have
\begin{equation}
\label{hij2t}
\frac{\partial^2 \mathbf{h}_{ij}}{\partial t^2}=
-\bigg\langle \frac{\partial^4 X}{\partial t^2\partial x_i\partial x_j} ,N \bigg\rangle
-2\bigg\langle \frac{\partial^3 X}{\partial t\partial x_i\partial x_j} ,\frac{\partial N}{\partial t} \bigg\rangle
-\bigg\langle \frac{\partial^2 X}{\partial x_i\partial x_j} ,\frac{\partial^2 N}{\partial t^2}\bigg\rangle
\end{equation}
Then by (\ref{Nt0}) and (\ref{N2t0})
\begin{equation}
\label{hij2t0}
\frac{\partial^2 \mathbf{h}_{ij}}{\partial t^2}\Big|_{t=0}
=4\,\frac{\partial f}{\partial x_i}\frac{\partial f}{\partial x_j}+\left(\varphi''(0)-|\mathrm{grad}f|^2\right)g_{ij}
\end{equation}

Finally, by (\ref{Ht})
\begin{equation}
\label{H2t}
\frac{\partial^2 H}{\partial t^2}=\sum_{i,j=1}^n\frac{\partial^2 \mathbf{g}^{ij}}{\partial t^2}\mathbf{h}_{ij}
+2\sum_{i,j=1}^n\frac{\partial \mathbf{g}^{ij}}{\partial t}\frac{\partial \mathbf{h}_{ij}}{\partial t}
+\sum_{i,j=1}^n\mathbf{g}^{ij}\frac{\partial^2 \mathbf{h}_{ij}}{\partial t^2}
\end{equation}
Then by (\ref{ijgt0}), (\ref{ijg2t0}), (\ref{hijt0}) and (\ref{hij2t0})
$$
\frac{\partial^2 H}{\partial t^2}\Big|_{t=0}=
2f^2-\varphi''(0)-\frac{(n-2)}{n}|\mathrm{grad}f|^2
+\frac{4}{n}f\Delta f
$$
\end{proof}

\begin{lemma}	
\label{lemma4}
In the same hypothesis as lemma\ref{lemma2} we have
\begin{equation}
\label{sqrtG3t0}			
\frac{\partial^3}{\partial t^3}\sqrt{\mathbf{g}}\, \Big|_{t=0}=
\bigg((n^3-3n^2+2n)f^3+n\varphi'''(0)\bigg)\sqrt{g}
\end{equation}
$$
+\bigg((3n-6)\,f|\mathrm{grad}f|^2
+(3n^2-3n)\varphi''(0)f\bigg)\sqrt{g}
$$
\begin{equation}
\label{H3t0}		
\frac{\partial^3\mathbf{H}}{\partial t^3}\Big|_{t=0}=
-n\varphi'''(0)-6nf^3
+(9n-18)f|\mathrm{grad}f|^2
\end{equation}
$$
+6n\varphi''(0)f
-18f^2\Delta f+6\varphi''(0)\Delta f
+\frac{(3n+6)}{n}\,|\mathrm{grad}f|^2\Delta f
$$
\end{lemma}
\begin{proof}
Deriving the expression in (\ref{gij2t}) we have
\begin{equation}
\label{gij3t}
\frac{\partial^3\mathbf{g}_{ij}}{\partial t^3}=
6(f+\varphi'(t) )\varphi''(t) g_{ij}+2(1+tf+\varphi(t) )\varphi'''(t) g_{ij}
\end{equation}
For $t=0$ we have
\begin{equation}
\label{gij3t0}
\frac{\partial^3\mathbf{g}_{ij}}{\partial t^3}\Big|_{t=0}
=\Big(6f\varphi''(0)+2\varphi'''(0)\Big) g_{ij}
\end{equation}

Deriving the expression in (\ref{ijg2t}) we have
\begin{equation}
\label{ijg3t}
\frac{\partial^3\mathbf{g}^{ij}}{\partial t^3}=
-2\sum_{r,p=1}^{n}\left( 3(f+\varphi'(t))\varphi''(t)+
(1+tf+\varphi(t) )\varphi'''(t) \right)g_{pr}\mathbf{g}^{ip}\mathbf{g}^{rj}
\end{equation}
$$
-4\sum_{r,p=1}^{n}\left( \frac{\partial f}{\partial x_p}\frac{\partial f}{\partial x_r}
+(f+\varphi'(t))^2g_{pr}+(1+tf+\varphi(t) )\varphi''(t) g_{pr}\right)\frac{\partial\mathbf{g}^{ip}}{\partial t}\mathbf{g}^{rj}
$$
$$
-4\sum_{r,p=1}^{n}\left( \frac{\partial f}{\partial x_p}\frac{\partial f}{\partial x_r}
+(f+\varphi'(t))^2g_{pr}+(1+tf+\varphi(t) )\varphi''(t) g_{pr}\right)\frac{\partial\mathbf{g}^{rj}}{\partial t}\mathbf{g}^{ip}
$$
$$
-2\sum_{r,p=1}^{n}\left( t\frac{\partial f}{\partial x_p}\frac{\partial f}{\partial x_r}
+(1+tf+\varphi(t) )(f+\varphi'(t) )g_{pr}\right)\frac{\partial^2\mathbf{g}^{ip}}{\partial t^2}\mathbf{g}^{rj}
$$
$$
-4\sum_{r,p=1}^{n}\left( t\frac{\partial f}{\partial x_p}\frac{\partial f}{\partial x_r}
+(1+tf+\varphi(t) )(f+\varphi'(t) )g_{pr}\right)\frac{\partial\mathbf{g}^{ip}}{\partial t}
\frac{\partial\mathbf{g}^{rj}}{\partial t}
$$
$$
-2\sum_{r,p=1}^{n}\left( t\frac{\partial f}{\partial x_p}\frac{\partial f}{\partial x_r}
+(1+tf+\varphi(t) )(f+\varphi'(t) )g_{pr}\right)\frac{\partial^2\mathbf{g}^{rj}}{\partial t^2}\mathbf{g}^{ip}
$$
Then for $t=0$, by (\ref{ijgt0}) and (\ref{ijg2t0})
\begin{equation}
\label{ijg3t0}
\frac{\partial^3\mathbf{g}^{ij}}{\partial t^3}\Big|_{t=0}=
\left(\frac{24}{n}f|\mathrm{grad}f|^2-24f^3
\right)g^{ij}
\end{equation}
$$
+\Big(18\varphi''(0)f
-2\varphi'''(0)\Big)g^{ij}
$$

Deriving the expression in (\ref{sqrtG2t}) we have
\begin{equation}
\label{sqrtG3t}
\frac{\partial^3}{\partial t^3}\sqrt{\mathbf{g}}
=\frac{1}{2}\bigg(\sum_{i,j=1}^{n}\frac{\partial^2 \mathbf{g}^{ij}}{\partial t^2}\;
\frac{\partial\mathbf{g}_{ij}}{\partial t}\bigg)\sqrt{\mathbf{g}}
+\bigg(\sum_{i,j=1}^{n}\frac{\partial \mathbf{g}^{ij}}{\partial t}\;
\frac{\partial^2\mathbf{g}_{ij}}{\partial t^2}\bigg)\sqrt{\mathbf{g}}
\end{equation}
$$
+\bigg(\sum_{i,j=1}^{n}\frac{\partial \mathbf{g}^{ij}}{\partial t}\;
\frac{\partial\mathbf{g}_{ij}}{\partial t}\bigg)\frac{\partial\sqrt{\mathbf{g}}}{\partial t}	
+\frac{1}{2}\bigg(\sum_{i,j=1}^{n}\mathbf{g}^{ij}\;
\frac{\partial^3\mathbf{g}_{ij}}{\partial t^3}\bigg)\sqrt{\mathbf{g}}
$$
$$
+\bigg(\sum_{i,j=1}^{n}\mathbf{g}^{ij}\frac{\partial^2 \mathbf{g}_{ij}}{\partial t^2}\bigg)\frac{\partial\sqrt{\mathbf{g}}}{\partial t}
+\frac{1}{2}\bigg(\sum_{i,j=1}^{n}\mathbf{g}^{ij}\frac{\partial\mathbf{g}_{ij}}{\partial t}\bigg)\frac{\partial^2\sqrt{\mathbf{g}}}{\partial t^2}
$$
Then by (\ref{gijt0}), (\ref{gij2t0}), (\ref{gij3t0}), (\ref{ijgt0}),
(\ref{ijg2t0}), (\ref{sqrtGt0}) and (\ref{sqrtG2t0})
$$
\frac{\partial^3}{\partial t^3}\sqrt{\mathbf{g}}\, \Big|_{t=0}=
\bigg((n^3-3n^2+2n)f^3+n\varphi'''(0)\bigg)\sqrt{g}
$$
$$
+\bigg((3n-6)\,f|\mathrm{grad}f|^2
+(3n^2-3n)\varphi''(0)f\bigg)\sqrt{g}
$$

We have
\begin{equation}
\label{N3tX}
0=\frac{\partial^3}{\partial t^3}\left\langle N,N\right\rangle
=2\bigg\langle \frac{\partial^3 N}{\partial t^3},N\bigg\rangle
+6\bigg\langle \frac{\partial^2 N}{\partial t^2},\frac{\partial N}{\partial t}\bigg\rangle
\end{equation}
Then by (\ref{Nt0}) and (\ref{N2t0})
$$
\bigg\langle \frac{\partial^3 N}{\partial t^3}\Big|_{t=0},\Phi\bigg\rangle
=-3\,\bigg\langle \frac{\partial^2 N}{\partial t^2}\Big|_{t=0},\frac{\partial N}{\partial t}\Big|_{t=0}\bigg\rangle
=6f|\mathrm{grad}f|^2
$$
Furthermore
\begin{equation}
\label{N3tXxi}
0=\frac{\partial^3}{\partial t^3}\bigg\langle N,\frac{\partial X}{\partial x_i}\bigg\rangle
\end{equation}
$$
=\bigg\langle\frac{\partial^3 N}{\partial t^3},\frac{\partial X}{\partial x_i}\bigg\rangle
+3\,\bigg\langle \frac{\partial^2N}{\partial t^2},\frac{\partial^2X}{\partial t\partial x_i}\bigg\rangle
+3\,\bigg\langle \frac{\partial N}{\partial t},\frac{\partial^3X}{\partial t^2\partial x_i}\bigg\rangle
+\bigg\langle N,\frac{\partial^4X}{\partial t^3\partial x_i}\bigg\rangle
$$
Then by (\ref{Nt0}) and (\ref{N2t0})
$$
\bigg\langle\frac{\partial^3 N}{\partial t^3}\Big|_{t=0},\frac{\partial \Phi}{\partial x_i}\bigg\rangle
=
\left(3|\mathrm{grad}f|^2-6f^2+3\varphi''(0)\right)\frac{\partial f}{\partial x_i}
$$
and
\begin{equation}
\label{N3t0}
\frac{\partial^3 N}{\partial t^3}\Big|_{t=0}=
\bigg\langle\frac{\partial^3 N}{\partial t^3}\Big|_{t=0},\Phi\bigg\rangle\Phi
+\sum_{i,j=1}^ng^{ij}\bigg\langle\frac{\partial^3 N}{\partial t^3}\Big|_{t=0},\frac{\partial \Phi}{\partial x_i}\bigg\rangle \frac{\partial \Phi}{\partial x_j}
\end{equation}
$$
=6f|\mathrm{grad}f|^2\Phi
+\left(3|\mathrm{grad}f|^2-6f^2+3\varphi''(0)\right)\mathrm{grad}f
$$

By (\ref{hij2t}) we have
\begin{equation}
\label{hij3t}
\frac{\partial^3 \mathbf{h}_{ij}}{\partial t^3}=
-\bigg\langle \frac{\partial^5 X}{\partial t^3\partial x_i\partial x_j} ,N \bigg\rangle
-3\bigg\langle \frac{\partial^4 X}{\partial t^2\partial x_i\partial x_j} ,\frac{\partial N}{\partial t} \bigg\rangle
\end{equation}
$$
-3\bigg\langle \frac{\partial^3 X}{\partial t\partial x_i\partial x_j} ,\frac{\partial^2 N}{\partial t^2} \bigg\rangle
-\bigg\langle \frac{\partial^2 X}{\partial x_i\partial x_j} ,\frac{\partial^3 N}{\partial t^3}\bigg\rangle
$$
Then by (\ref{Nt0}), (\ref{N2t0}) and (\ref{N3t0})
\begin{equation}
\label{hij3t0}
\frac{\partial^3 \mathbf{h}_{ij}}{\partial t^3}\Big|_{t=0}=
\left(\varphi'''(0)+3\,f|\mathrm{grad}f|^2\right)g^{ij}
-12f\frac{\partial f}{\partial x_i}\frac{\partial f}{\partial x_j}
\end{equation}
$$
+3|\mathrm{grad}f|^2\left(\frac{\partial^2f}{\partial x_i\partial x_j}
-\sum_{k=1}^n\Gamma_{ij}^k\frac{\partial f}{\partial x_k}\right)
$$

Finally, deriving the expression in (\ref{H2t}) we have
\begin{equation}
\label{H3t}
\frac{\partial^3 H}{\partial t^3}=\sum_{i,j=1}^n\frac{\partial^3 \mathbf{g}^{ij}}{\partial t^3}\mathbf{h}_{ij}
+3\sum_{i,j=1}^n\frac{\partial^2 \mathbf{g}^{ij}}{\partial t^2}\frac{\partial \mathbf{h}_{ij}}{\partial t}
+3\sum_{i,j=1}^n\frac{\partial \mathbf{g}^{ij}}{\partial t}\frac{\partial^2 \mathbf{h}_{ij}}{\partial t^2}
\end{equation}
$$
+\sum_{i,j=1}^n\mathbf{g}^{ij}\frac{\partial^3 \mathbf{h}_{ij}}{\partial t^3}
$$
Then by (\ref{ijgt0}), (\ref{ijg2t0}), (\ref{ijg3t0}), (\ref{hijt0}), (\ref{hij2t0}) and (\ref{hij3t0})
$$
\frac{\partial^3 H}{\partial t^3}\Big|_{t=0}=
-n\varphi'''(0)-6nf^3
+(9n-18)f|\mathrm{grad}f|^2
$$
$$
+6n\varphi''(0)f
-18f^2\Delta f+6\varphi''(0)\Delta f
+\frac{(3n+6)}{n}\,|\mathrm{grad}f|^2\Delta f
$$
\end{proof}

\begin{lemma}
\label{lemma5}	
In the same hypothesis as lemma\ref{lemma2} we have
\begin{equation}
\label{sqrtG4t0}			
\frac{\partial^4}{\partial t^4}\sqrt{\mathbf{g}}\Big|_{t=0}=
(6n^2-30n+32)f^2|\mathrm{grad}f|^2\sqrt{g}
\end{equation}
$$
+(6n^3-18n^2+8n)\varphi''(0)f^2\sqrt{g}
+(n^4-6n^3+11n^2-12n)f^4\sqrt{g}
$$
$$
+(4n^2-4n)\varphi'''(0)f\sqrt{g}
+(3n^2-n)\varphi''(0)^2\sqrt{g}
+n\varphi^{(4)}(0)\sqrt{g}
$$
$$
+\frac{(3n-4)}{n}\,|\mathrm{grad}f|^4\sqrt{g}
+(6n-8)\varphi''(0)|\mathrm{grad}f|^2\sqrt{g}
$$
and
\begin{equation}
\label{H4t0}		
\frac{\partial^4 \mathbf{H}}{\partial t^4}\Big|_{t=0}=-n\varphi^{(4)}(0)
+6n\varphi''(0)^2+8n\varphi'''(0)f+24nf^4
\end{equation}
$$
-36n\varphi''(0)f^2
+\frac{(9n^2-12n-24)}{n}|\mathrm{grad}f|^4
-72\varphi''(0)f\Delta f
$$
$$
-\frac{(48n+96)}{n}f\,|\mathrm{grad}f|^2\Delta f
+96f^3\Delta f
+8\varphi'''(0)\Delta f
$$
$$
+(-72n+48)f^2|\mathrm{grad}f|^2
+(18n-36)\varphi''(0)|\mathrm{grad}f|^2
$$
\end{lemma}
\begin{proof}
Deriving the expression in (\ref{gij3t}) and making $t = 0$ we have we have
\begin{equation}
\label{gij4t0}
\frac{\partial^4\mathbf{g}_{ij}}{\partial t^4}\Big|_{t=0}=6\varphi''(0)^2 g_{ij}
+8f\varphi'''(0) g_{ij}
+2\varphi^{(4)}(0) g_{ij}
\end{equation}

Deriving the expression in (\ref{ijg3t}) and making $t = 0$ we have
\begin{equation}
\label{ijg4t0}
\frac{\partial^4\mathbf{g}^{ij}}{\partial t^4}\Big|_{t=0}=
\left(18\varphi''(0)^2-2\varphi^{(4)}(0) \right)g^{ij}
\end{equation}
$$
+\bigg(120f^4-144\varphi''(0)f^2
+\frac{24}{n^2}|\mathrm{grad}f|^4\bigg)g^{ij}
$$
$$
+\bigg(-\frac{240}{n}f^2|\mathrm{grad}f|^2
+\frac{48}{n}\varphi''(0)|\mathrm{grad}f|^2+24\varphi'''(0)f
\bigg)g^{ij}
$$

Deriving the expression in(\ref{sqrtG3t}) and making $t = 0$ we obtain (\ref{sqrtG4t0}).

To establish (\ref{H4t0}) we first derive the expression in (\ref{N3tX}) and make 
$t = 0$ to obtain
$$
\bigg\langle \frac{\partial^4 N}{\partial t^4}\Big|_{t=0},\Phi\bigg\rangle
=\left(12|\mathrm{grad}f|^2-36f^2+12\varphi''(0)\right)|\mathrm{grad}f|^2
-3|\mathrm{grad}f|^4
$$
Furthermore, deriving the expression in(\ref{N3tXxi}) and making $t = 0$ we have
$$
\bigg\langle\frac{\partial^4 N}{\partial t^4},\frac{\partial X}{\partial x_i}\bigg\rangle
=\Big(4\,\varphi'''(0)-24\varphi''(0)f\Big)\frac{\partial f}{\partial x_i}
$$
$$
+\Big(24f^3-36f|\mathrm{grad}f|^2\Big)\frac{\partial f}{\partial x_i}
$$
Then
$$
\frac{\partial^4 N}{\partial t^4}\Big|_{t=0}=
\Big(9|\mathrm{grad}f|^4-36f^2|\mathrm{grad}f|^2
+12\varphi''(0)|\mathrm{grad}f|^2\Big)\Phi
$$
$$
+\Big(24f^3-36f|\mathrm{grad}f|^2
-24\varphi''(0)f
+4\varphi'''(0)\Big)\mathrm{grad}f
$$

Deriving the expression in (\ref{hij3t}) and making $t = 0$ we have
$$
\frac{\partial^4 \mathbf{h}_{ij}}{\partial t^4}\Big|_{t=0}=
\Big(9|\mathrm{grad}f|^4-12f^2|\mathrm{grad}f|^2
+6\varphi''(0)|\mathrm{grad}f|^2+\varphi^{(4)}(0)\Big)g_{ij}
$$
$$
+\Big(48f^2-24|\mathrm{grad}f|^2-24\varphi''(0)\Big)\frac{\partial f}{\partial x_i}\frac{\partial f}{\partial x_j}
$$
$$
-24f|\mathrm{grad}f|^2\left(\frac{\partial^2f}{\partial x_i\partial x_j}
-\sum_{k=1}^n\Gamma_{ij}^k\frac{\partial f}{\partial x_k}\right)
$$

Deriving the expression in (\ref{H3t}) we have
$$
\frac{\partial^4 H}{\partial t^4}=
\frac{1}{n}\sum_{i,j=1}^n\frac{\partial^4 \mathbf{g}^{ij}}{\partial t^4}\mathbf{h}_{ij}
+\frac{4}{n}\sum_{i,j=1}^n\frac{\partial^3 \mathbf{g}^{ij}}{\partial t^3}\frac{\partial \mathbf{h}_{ij}}{\partial t}
$$
$$
+\frac{6}{n}\sum_{i,j=1}^n\frac{\partial^2 \mathbf{g}^{ij}}{\partial t^2}\frac{\partial^2 \mathbf{h}_{ij}}{\partial t^2}
+\frac{4}{n}\sum_{i,j=1}^n\frac{\partial \mathbf{g}^{ij}}{\partial t}\frac{\partial^3 \mathbf{h}_{ij}}{\partial t^3}
+\frac{1}{n}\sum_{i,j=1}^n\mathbf{g}^{ij}\frac{\partial^4 \mathbf{h}_{ij}}{\partial t^4}
$$
and making $t = 0$ we obtain (\ref{H4t0}).

\end{proof}
\section{Willmore Functional}

We will present the Willmore theorem for sufarce of genus 0 proposed in \cite{L} that given us a characterization for Euclidian sphere in dimension 2. Then, we will prove a local generalization for this result in higher dimensions. 

\begin{theorem}[T. J. Willmore \cite{L}]\label{will}
Let $S$ have genus 0.Then for all $f\in\mathcal{F}$ we have
\begin{equation*}
	W(f)\geq 2 .
\end{equation*}
Moreover, $W(f)=2$ if and only if $f(S)$ is a euclidean sphere.
\end{theorem}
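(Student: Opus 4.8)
The plan is to reduce the whole statement to a single pointwise algebraic identity combined with the Gauss--Bonnet theorem, which is precisely what makes the genus $0$ hypothesis so convenient. First I would record the elementary relation between the mean and Gauss curvatures of a surface: writing $\mathrm{H}=(k_1+k_2)/2$ for the mean curvature and $K=k_1k_2$ for the Gauss curvature, one has
\begin{equation*}
\mathrm{H}^2-K=\frac{(k_1-k_2)^2}{4}\geq 0,
\end{equation*}
with equality at a point exactly when that point is umbilic ($k_1=k_2$). This is the only local geometric input needed.

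Next I would invoke Gauss--Bonnet. Since $g(S)$ is a closed oriented surface of genus $0$, its Euler characteristic is $\chi(S)=2$, so
\begin{equation*}
\int_{g(S)}K\,\mathrm{dS}=2\pi\,\chi(S)=4\pi.
\end{equation*}
Combining this with the pointwise inequality gives
\begin{equation*}
\int_{g(S)}\mathrm{H}^2\,\mathrm{dS}=\int_{g(S)}\big(\mathrm{H}^2-K\big)\,\mathrm{dS}+\int_{g(S)}K\,\mathrm{dS}\geq 0+4\pi=4\pi,
\end{equation*}
and dividing by $2\pi$ yields $W(g)\geq 2$, which is the asserted bound.

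For the equality case I would trace back through the estimate: $W(g)=2$ holds if and only if $\int_{g(S)}(\mathrm{H}^2-K)\,\mathrm{dS}=0$. Since the integrand $(k_1-k_2)^2/4$ is continuous and nonnegative, this forces $k_1=k_2$ at every point, that is, $g(S)$ is totally umbilic. I would then close the argument with the classical fact that a connected closed totally umbilic surface in $\mathbb{R}^3$ is a round sphere: the common principal curvature is constant by the Codazzi equations, and a totally umbilic surface of constant curvature is a piece of a sphere, which closedness and connectedness promote to the full sphere. Conversely, on a round sphere $k_1=k_2$, so $\mathrm{H}^2=K$ and $W=2$ directly.

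The main obstacle is confined entirely to the equality discussion rather than the inequality: one must justify carefully that total umbilicity upgrades to "round sphere," which needs the Codazzi equations to show the umbilic curvature is constant and a connectedness argument to rule out a union of spherical caps. The inequality itself is immediate once Gauss--Bonnet is available, and I note that the genus $0$ hypothesis is used exactly to force $\int_{g(S)}K\,\mathrm{dS}=4\pi$; for higher genus the same bound $W\geq 2$ still holds, but one can no longer conclude from Gauss--Bonnet alone, since then $\int_{g(S)}K\,\mathrm{dS}=2\pi\chi\leq 4\pi$, and one must instead restrict the Gauss map to the set $\{K\geq 0\}$ and use that its image covers $\mathbb{S}^2$.
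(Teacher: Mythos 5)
The paper does not prove this theorem at all: it is quoted from Willmore's note \cite{L} and used as a black box in the proof of Theorem \ref{FirstTheorem}, so there is no internal proof to compare against. Your argument is the standard one and it is correct. The pointwise identity $\mathrm{H}^2-K=(k_1-k_2)^2/4\ge 0$ together with Gauss--Bonnet ($\int_{g(S)}K\,\mathrm{dS}=2\pi\chi=4\pi$ for genus $0$) immediately gives $\int_{g(S)}\mathrm{H}^2\,\mathrm{dS}\ge 4\pi$, i.e.\ $W(g)\ge 2$ with the paper's normalization, and equality forces total umbilicity, whence (via Codazzi, the umbilic factor is constant) a round sphere; the converse is a direct computation. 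You correctly isolate the only point requiring care, namely the promotion of ``totally umbilic'' to ``round sphere.'' Your closing remark is also accurate and worth keeping straight: Willmore's original argument in \cite{L} proceeds via $\int_{\{K\ge 0\}}K\,\mathrm{dS}\ge 4\pi$ (the Gauss map restricted to $\{K\ge 0\}$ covers $\mathbb{S}^2$, since every direction is the outward normal at a point maximizing the corresponding height function, where $K\ge0$), which yields $W\ge 2$ for every genus; the Gauss--Bonnet shortcut you use is the cleaner route available precisely in the genus-$0$ case stated here.
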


Now we will go work with objects of higher dimension. For this, we will use the volume-preserving variation theory for hypersurface presented in the previous section \ref{Var}. We will begin by defining a generalization for the Willmore functional.

\begin{definition}

For each $t\in(-\varepsilon,\varepsilon)$ we associate a immersion $X_t$ definite in (\ref{X_t}) and hence a embedded hypersurface $M^n_t$ at $\mathbb{R}^{n+1}$ with the same volume of $\mathbb{S}^n,$ such that $M_0=\mathbb{S}^n.$ So, we let's define the functional $\mathcal{W}:(-\varepsilon,\varepsilon) \to  \mathbb{R}$ given by
$$
\mathcal{W}(t)=\frac{1}{n\, vol(\mathbb{S}^n)} \int_{M_t} \mathbf{H}^2\,\mathrm{dM_t}.
$$
where $\mathbf{H}=\mathbf{H}(t)$ is the mean curvature non-normalized associated the inherited metric $\mathbf{g}_{ij}=\mathbf{g}_{ij}(t)$ at $M_t=X_t(\mathbb{S}^n)$.
\end{definition}

\begin{theorem}
\label{WillG}
For each $t\in(-\varepsilon,\varepsilon)$ we associate a embedded hypersurface $M_t$ with the same volume of $M_0=\mathbb{S}^n,$ given for the variation $X_t$ in (\ref{X_t}),  with $f$ having zero average. So there is $\delta\in(0,\varepsilon)$ such that for all $t\in(-\delta,\delta)$ we have
\begin{equation}
\mathcal{W}(t)\geq n.
\end{equation}
Moreover, $\mathcal{W}(t)=n$ for each $t\in(-\delta,\delta)$ if and only if $M_t=\mathbb{S}^n$.
\end{theorem}

\begin{proof}

We will prove that the $\mathcal{W}$ functional attains its local minimum in $t=0.$

\allowdisplaybreaks{\begin{eqnarray*}
		\mathcal{W}'(0)&=&\frac{1}{n\,vol(\mathbb{S}^n)}\frac{\partial}{\partial t}\Big|_{t=0}\left(\int_{M_t}\mathbf{H}^2\, \mathrm{dM_t}\right) \\
		&=&\frac{1}{n\,vol(\mathbb{S}^n)}\left(\int_U\frac{\partial \mathbf{H}^2}{\partial t}\Big|_{t=0} \sqrt{g}\, \mathrm{d}x+
		\int_U\frac{\partial \sqrt{\mathbf{g}}}{\partial t}\Big|_{t=0}\mathrm{d}x\right)\\
		&=&\frac{1}{n\,vol(\mathbb{S}^n)}\left((n^3-2n^2)\int_{\mathbb{S}^n}f\,\mathrm{d}\mathbb{S}^n-2n\int_{\mathbb{S}^n}\Delta f\,\mathrm{d}\mathbb{S}^n\right)\\
		&=&0,
\end{eqnarray*}}
Here we use, for $n\geq 3$, that $X_t$ is variation volume-preserving, then by (\ref{varphit0})
$$
\int_{\mathbb{S}^n}f\mathrm{d}\mathbb{S}^n=0.
$$
So the sphere is a critical point for the $\mathcal{W}$ functional. Moreover,  by calculating the second differential, we get
\allowdisplaybreaks{\begin{eqnarray*}
		\mathcal{W}''(0)&=&\frac{1}{n\,vol(\mathbb{S}^n)}\frac{\partial^2}{\partial t^2}\Big|_{t=0}\left(\int_{M_t}\mathbf{H}^2\, \mathrm{dM_t}\right)\\
		&=&\frac{1}{n\,vol(\mathbb{S}^n)}\left(\int_{U}\frac{\partial^2 \mathbf{H}^2}{\partial t^2}\Big|_{t=0}\sqrt{g}\,\mathrm{d}x+2\int_U\frac{\partial \mathbf{H}^2}{\partial t}\Big|_{t=0}\frac{\partial \sqrt{\mathbf{g}}}{\partial t}\Big|_{t=0}\mathrm{d}x\right) \\
		&=&\frac{1}{n\, vol(\mathbb{S}^n)}\bigg((-2n^3+4n^2)\int_{\mathbb{S}^n}f^2+2\int_{\mathbb{S}^n}(\Delta f)^2\\
		&&+(2n^2-6n)\int_{\mathbb{S}^n}|\nabla f|^2\bigg)\\
\end{eqnarray*}}
We want to determine the sign of $\mathcal{W}''(0)$. For this we will consider an orthogonal base $\{\phi_i\}_{i\in\mathbb{N}}$ of the space $L^2(\mathbb{S}^n)$, formed by the eigenfunction of the Laplace operator on $\mathbb{S}^n$, $-\Delta \phi_i=\beta_i\phi_i,$ $\beta_0=0$, $\beta_1=\dots =\beta_{n+1}=n$, $\phi_0$ is constant and 
$\phi_1, \dots ,\phi_{n+1}$ are known as the first harmonic spheres. By (\ref{varphit0}) the function $f$ is orthogonal to a constant. So, 
$$
f=\sum_{i=1}^{\infty}a_i\phi_i
$$
and
\begin{equation*}
\mathcal{W}''(0)=\frac{2}{n\, vol(\mathbb{S}^n)}\sum_{i=1}^{\infty}(\beta_i-n)(\beta_i+n^2-2n)\,a^2_i\!\int_{\mathbb{S}^n}\phi_i^2.
\end{equation*}
Then $\mathcal{W}''(0)\geq 0$, with equality if and only if $a_i=0$ for all $i\geq n+2$. 

We then have to study the case where
$$
f=\sum_{i=1}^{n+1}a_i\phi_i
$$
i.e., $\Delta f=-n\,f$. In this case we have the following lemma.
\begin{lemma}
If $\Delta f=-n\,f$ then
\begin{equation}
\label{zero}
\int_{\mathbb{S}^n}f^3=
\int_{\mathbb{S}^n}f|\mathrm{grad}f|^2=0 \, ,
\end{equation}
\begin{equation}
\label{intg4}
\int_{\mathbb{S}^n}f^4=\frac{3(n+1)}{(n+3)|\mathbb{S}^n|}\left(\int_{\mathbb{S}^n}f^2\right)^2\!\!\!,
\end{equation}
\begin{equation}
\label{cor4gradg}
\int_{\mathbb{S}^n}|\mathrm{grad}f|^4=\frac{n(n+1)(n+2)}{(n+3)|\mathbb{S}^n|}\left(\int_{\mathbb{S}^n}f^2\right)^2\!\!\!,
\end{equation}
\begin{equation}
\label{g2grad2g}
\int_{\mathbb{S}^n}f^2|\mathrm{grad}f|^2=
\frac{n(n+1)}{(n+3)|\mathbb{S}^n|}\left(\int_{\mathbb{S}^n}f^2\right)^2\!\!\!,
\end{equation}
\begin{equation}
\label{varphi2txi}
\varphi''(0)=
-\frac{n}{|\mathbb{S}^n|}\int_{\mathbb{S}^n}f^2\!,
\end{equation}
and
\begin{equation}
\label{varphi4txi}
\varphi^{(4)}(0)=
\frac{(4n^3-40n^2+52n-24)}{(n+3)|\mathbb{S}^n|^2}\left(\int_{\mathbb{S}^n}f^2\right)^2\!\!\!.
\end{equation}
\begin{proof}    
Let $\Phi:U\subset\mathbb{R}^n\rightarrow\mathbb{S}^n\subset \mathbb{R}^{n+1}$ 
be a parametrization of the Euclidean sphere such that $\Phi(U)=\mathbb{S}^n-\{p_0\}$, $p_0\in\mathbb{S}^n$.
Note that $Y:U \times (0,\infty )\to \mathbb{R}^{n+1}$ 
given by $Y(x, r)=r\,\Phi(x)$ is a parametrization of  $\mathbb{R}^{n+1}$ known as spherical coordinates. In this coordinate system the Laplace operator has the form
$$
\Delta_{\mathbb{R}^{n+1}}=\frac{\partial^2}{\partial r^2}+\frac{n}{r}\frac{\partial}{\partial r}+\frac{1}{r^2}\Delta_{\mathbb{S}^n}.
$$

Let us denote by $y^i:\mathbb{S}^n\to \mathbb{R}$, $i=1, \dots , n+1$, the coordinate functions of $\mathbb{S}^n$ given by
\begin{equation}
\label{xi}
y^i(\Phi(x))=\Phi_i(x)
\end{equation} 
Such functions are called first spherical harmonics. The coordinate functions
$y^i:\mathbb{R}^{n+1}\to \mathbb{R}$ are harminics in $\mathbb{R}^{n+1}$. On the other hand
$$
y^i(X(x, r))=r\,\Phi_i(x).
$$
So,
$$
0=\Delta_{\mathbb{R}^{n+1}}y^i=\frac{n}{r}\,\Phi_i+\frac{1}{r}\,\Delta_{\mathbb{S}^n}\Phi_i
$$
and by (\ref{xi}) we have
$$
-\Delta_{\mathbb{S}^n}y^i=n\,y^i
$$
In addition, the gradient of the functions $y^i:\mathbb{S}^n\to \mathbb{R}$ satisfy
\begin{equation}
\label{ProdGrad}
\langle\mathrm{grad}\,y^i, \mathrm{grad}\,y^j\rangle =\delta_{ij}-y^iy^j.
\end{equation} 
In fact, to prove this we can assume, without loss of generality, that
$i,j\leq n$ and use the sphere's coordinate system $w:B_1\to \mathbb{S}^n$ given by
$$
w(x)=w(x_1,\dots ,x_n)=\left(x_1,\dots ,x_n, \sqrt{1-|x|^2}\;\right)
$$
In this coordinate system we have $y^i(w(x))=x_i$, for all $i\leq n$ and
$$
\frac{\partial w}{\partial x_i}=e_i-\frac{x_i}{\sqrt{1-|x|^2}}\,e_{n+1},
$$
where $\{e_1,\dots ,e_{n+1}\}$ is the canonical base of $\mathbb{R}^{n+1}$. Then, for $i,j\leq n$,
$$
\mathrm{g}_{ij}(x)=\delta_{ij}+\frac{x_ix_j}{1-|x|^2},\;\;\;\;\mathrm{g}^{ij}(u)=\delta_{ij}-u_iu_j
$$
and
$$
\mathrm{grad}\,y^i(w(x))=\sum_{j,k=1}^n\mathrm{g}^{jk}(x)\frac{\partial (y^i\circ w)}{\partial x_j}\frac{\partial}{\partial x_k}
=\sum_{k=1}^n\mathrm{g}^{ik}(x)\frac{\partial}{\partial x_k}
$$
$$
=\left(\sum_{k=1}^n(\delta_{ik}-x_ix_k)e_k\right)-x_i\sqrt{1-|x|^2}\,e_{n+1}
$$
So, at the point $w(x)$,
$$
\langle\mathrm{grad}\,y^i,\mathrm{grad}\,y^j\rangle
=\sum_{k=1}^n(\delta_{ik}-x_ix_k)(\delta_{jk}-x_jx_k)+x_ix_j(1-|x|^2)
$$
$$
=\delta_{ij}-y^i(w(x))\, y^j(w(x)).
$$

We must remember that the first harmonic spheres form a basis for the self-space associated with the self-value $n$. In addition they are $L^2$-orthogonal two by two. Indeed, by  (\ref{ProdGrad}), 
$$
\int_{\mathbb{S}^n}y^iy^j=-\frac{1}{n}\int_{\mathbb{S}^n}y^i\Delta y^j
=\frac{1}{n}\int_{\mathbb{S}^n}\langle\mathrm{grad}y^i,\mathrm{grad}y^j\rangle
=\frac{1}{n}\int_{\mathbb{S}^n}\bigg(\delta_{ij}-y^iy^j\bigg)
$$
Then
\begin{equation}
\label{intyiyj}
\int_{\mathbb{S}^n}y^iy^j=\frac{|\mathbb{S}^n|}{n+1}\,\delta_{ij}
\end{equation}

We also have to for any $i,j,k=1,\dots ,n+1$ we have
\begin{equation}
\label{yiyjyk}
\int_{\mathbb{S}^n}y^iy^jy^k=0 .
\end{equation}
In fact,
$$
\int_{\mathbb{S}^n}y^iy^jy^k=-\frac{1}{n}\int_{\mathbb{S}^n}y^iy^j\Delta y^k=-\frac{1}{n}\int_{\mathbb{S}^n}\Delta (y^iy^j)y^k
$$
$$
=-\frac{1}{n}\int_{\mathbb{S}^n}\bigg((\Delta y^i)y^jy^k+y^i(\Delta y^j)y^k+2y^k\langle\mathrm{grad}y^i,\mathrm{grad}y^j\rangle\bigg)
$$
$$
=2\int_{\mathbb{S}^n}y^iy^jy^k-\frac{2}{n}\int_{\mathbb{S}^n}y^k\,(\delta_{ij}-
y^iy^j)
=\frac{2n+2}{n}\int_{\mathbb{S}^n}y^iy^jy^k=0
$$
Now suppose that $\Delta f=-nf$, that is,
$$
f=\sum_{i=1}^{n+1}a_i\,y^i.
$$
So for (\ref{yiyjyk})
$$
\int_{\mathbb{S}^n}f^3=\sum_{i=1}^{n+1}a_ia_ja_k\int_{\mathbb{S}^n}y^iy^jy^k=0
$$
In addition we also have by (\ref{ProdGrad})
$$
|\mathrm{grad}\,f|^2=\sum_{i,j=1}^{n+1}a_i\,a_j\,\langle\mathrm{grad}\,y^i,\mathrm{grad}\,y^j\rangle=\sum_{i=1}^{n+1}a_i^2-f^2
$$
that is, by (\ref{intyiyj})
\begin{equation}
\label{grdf2}
|\mathrm{grad}\,f|^2=-f^2+\frac{(n+1)}{|\mathbb{S}^n|}\int_{\mathbb{S}^n}f^2
\end{equation}
Then
$$
\int_{\mathbb{S}^n}f|\mathrm{grad}f|^2=
-\int_{\mathbb{S}^n}f^3=0
$$
Let us now prove (\ref{intg4}). To this end we have that
$$
\int_{\mathbb{S}^n}y^iy^jy^ky^l=-\frac{1}{n}\int_{\mathbb{S}^n}y^iy^jy^k\Delta y^l
=-\frac{1}{n}\int_{\mathbb{S}^n}y^l\Delta(y^iy^jy^k)
$$
$$
=-\frac{1}{n}\int_{\mathbb{S}^n}y^l(y^iy^j\Delta y^k+y^iy^k\Delta y^j+y^jy^k\Delta y^i)
$$
$$
-\frac{2}{n}\int_{\mathbb{S}^n}y^l(y^i\langle \mathrm{grad}\,y^j,\mathrm{grad}\,y^k\rangle +
y^j \langle \mathrm{grad}\,y^i,\mathrm{grad}\,y^k\rangle + y^k \langle \mathrm{grad}\,y^i,\mathrm{grad}\,y^j\rangle )
$$
$$
=\frac{3n+6}{n}\int_{\mathbb{S}^n}y^iy^jy^ky^l-\frac{2\delta_{jk}}{n}\int_{\mathbb{S}^n}y^iy^l
-\frac{2\delta_{ik}}{n}\int_{\mathbb{S}^n}y^jy^l-\frac{2\delta_{ij}}{n}\int_{\mathbb{S}^n}y^ky^l
$$
So, for (\ref{intyiyj}) we have,
\begin{equation}
\label{yiyjykyl}
\int_{\mathbb{S}^n}y^iy^jy^ky^l=\frac{|\mathbb{S}^n|}{(n+1)(n+3)}(\delta_{jk}\delta_{il}+\delta_{ik}\delta_{jl}+\delta_{ij}\delta_{kl})
\end{equation}
So that
$$
\int_{\mathbb{S}^n}f^4=\sum_{i,j,k,l=1}^{n+1}a_ia_ja_ka_l\int_{\mathbb{S}^n}y^iy^jy^ky^l
$$
$$
=\frac{|\mathbb{S}^n|}{(n+1)(n+3)}\sum_{i,j,k,l=1}^{n+1}a_ia_ja_ka_l(\delta_{jk}\delta_{il}+\delta_{ik}\delta_{jl}+\delta_{ij}\delta_{kl})
$$
$$
=\frac{3|\mathbb{S}^n|}{(n+1)(n+3)}\sum_{i,j=1}^{n+1}a_i^2a_j^2
=\frac{3|\mathbb{S}^n|}{(n+1)(n+3)}\left(\sum_{i=1}^{n+1}a_i^2\right)^2
$$
$$
=\frac{3|\mathbb{S}^n|}{(n+1)(n+3)}\left(\frac{(n+1)}{|\mathbb{S}^n|}\int_{\mathbb{S}^n}f^2\right)^2
=\frac{3(n+1)}{(n+3)|\mathbb{S}^n|}\left(\int_{\mathbb{S}^n}f^2\right)^2
$$

Now, to prove (\ref{cor4gradg}), we're going to use (\ref{grdf2}) and get
$$
\int_{\mathbb{S}^n}|\mathrm{grad}f|^4=\int_{\mathbb{S}^n}f^4
+\frac{n^2-1}{|\mathbb{S}^n|}\left(\int_{\mathbb{S}^n}f^2\right)^2
$$
and by (\ref{intg4}) we have (\ref{cor4gradg}).

By (\ref{grdf2}) we have
$$
\int_{\mathbb{S}^n}f^2|\mathrm{grad}f|^2=
-\int_{\mathbb{S}^n}f^4
+\frac{n+1}{|\mathbb{S}^n|}\left(\int_{\mathbb{S}^n}f^2\right)^2
$$
and by (\ref{intg4}) we obtain (\ref{g2grad2g}).

Let us now prove (\ref{varphi2txi}). 
For this purpose, we observe that the variation in (\ref{X_t}) is by hypersurfaces of constant volume. Then by (\ref{sqrtG2t0})
$$
0=\int_{U}\frac{\partial^2\sqrt{\mathbf{g}}}{\partial t^2}\Big|_{t=0}\,dx
=\int_{\mathbb{S}^n}|\mathrm{grad}f|^2+(n^2-n)\int_{\mathbb{S}^n}f^2
+n|\mathbb{S}^n|\varphi''(0)
$$ 
and taking the value of $\varphi''(0)$ in the expression above we get (\ref{varphi2txi}).

Finally, it remains to prove (\ref{varphi4txi}). In a similar way the proof of the previous item, we integrate (\ref{sqrtG4t0}) and we use what we have already proven in this lemma. Then (\ref{varphi4txi}) is true. 
\end{proof}

We can now return to the proof of the theorem \ref{WillG}.
By Lemma (\ref{lemma3}), (\ref{lemma4}) and (\ref{zero})
\allowdisplaybreaks{\begin{eqnarray*}
\mathcal{W}'''(0)&=&
(2n^2-18n+80)\int_{\mathbb{S}^n}f^3 \\
 &-&\frac{(6n^2-30n+38)}{n}\int_{\mathbb{S}^n}f|\mathrm{grad}f|^2=0
\end{eqnarray*}}
So it will be necessary to derive the functional one more time.
\allowdisplaybreaks{\begin{eqnarray*}
\mathcal{W}^{(4)}(0)&=&-2|\mathbb{S}^n|\varphi^{(4)}(0)
+6|\mathbb{S}^n|\varphi''(0)^2 \\
&-&(24n^2-48n)\varphi''(0)\int_{\mathbb{S}^n}f^2 \\
&-&(24n^2-118n+120)\int_{\mathbb{S}^n}f^4 \\
&+&\frac{(12n^2-12n-24)}{n^2}\int_{\mathbb{S}^n}|\mathrm{grad}f|^4 \\
&-&\frac{(12n^3-84n^2+327n-366)}{n}\int_{\mathbb{S}^n}f^2|\mathrm{grad}f|^2
\end{eqnarray*}}
and by the previous Lemma
$$
\mathcal{W}^{(4)}(0)=
\frac{(12n^5+34n^4+161n^3-95n^2-42n-48)}{n(n+3)|\mathbb{S}^n|}\left(\int_{\mathbb{S}^n}f^2\right)^2
$$
Soon $\mathcal{W}^{(4)}(0)\geq 0$ with equality if and only if $f\equiv 0$, and we can conclude the theorem.
\end{lemma} 
\end{proof}

From this result one can to conjecture if very complete hypersurface $M\subset\mathbb{R}^{n + 1}$, with the same volume of the unit sphere, must to satisfy
$W(M)\geq n$, with equality if only if $M$ is the Euclidean sphere. Here
$$
W(M)=\frac{1}{n\, vol(M)}\int_M\mathbf{H}^2\mathrm{d}M
$$
and $\mathbf{H}$ is the mean curvature non-normalised of $M$.

\section{Characterization results for sphere.}

\subsection{A maximum property of $\mathbb{S}^2$}
Throughout this section, consider $S\subset\mathbb{R}^3$ a differentiable surface of class $\mathcal{C}^{\infty}$, oriented, closed, and of the same volume as the sphere $\mathbb{S}^n$.

\begin{theorem}
\label{FirstTheorem}
Consider the operator $\mathcal{L}:H^2(S)\rightarrow L^2(S)$ defined by
\begin{equation*}
	\mathcal{L}=-\Delta-\displaystyle\frac{1}{2}\mathbf{H}^2,
\end{equation*}
where $\Delta$ and $\mathbf{H}$ are, respectively, Laplacian and mean curvature non-normalized of the inherited metric in $S$ from the $\mathbb{R}^3$ metric.
The first eigenvalue $\lambda_1^{0}$ from the $\mathcal{L}$ operator in the $\mathbb{S}^2$ sphere is the global maximum between all the first $\mathcal{L}$ eigenvalues on $S$ genus 0 surface.
\end{theorem}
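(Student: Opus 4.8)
The plan is to bound $\lambda_1(S)$ from above by evaluating the Rayleigh quotient of $\mathcal{L}$ on a single well-chosen test function, and then to recognize the resulting quantity as (a multiple of) the Willmore energy, so that Theorem \ref{will} applies directly. The whole statement should come out as a short consequence of Willmore's inequality once the normalizations are bookkept correctly.

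First I would record the value $\lambda_1^{0}$ on $\mathbb{S}^2$. On the unit sphere the principal curvatures are $k_1=k_2=1$, so $\mathbf{H}\equiv 2$ and $\tfrac12\mathbf{H}^2\equiv 2$ is constant; hence $\mathcal{L}=-\Delta-2$, and since the bottom of the spectrum of $-\Delta$ is $0$ (attained by the constants), we get $\lambda_1^{0}=-2$, with constant first eigenfunction. The key observation here is that the constant function is genuinely the optimal test function on $\mathbb{S}^2$, so the bound we are about to write down is sharp precisely at the sphere.

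Next, for an arbitrary genus-$0$ surface $S$ with $vol(S)=vol(\mathbb{S}^2)=4\pi$, I would use the variational characterization
$$
\lambda_1(S)=\inf_{u\not\equiv 0}\frac{\int_S\left(|\nabla u|^2-\tfrac12\mathbf{H}^2u^2\right)\mathrm{dS}}{\int_S u^2\,\mathrm{dS}}
$$
and insert the constant test function $u\equiv 1$, which gives
$$
\lambda_1(S)\le\frac{-\tfrac12\int_S\mathbf{H}^2\,\mathrm{dS}}{vol(S)}.
$$
Since $\mathbf{H}=2\mathrm{H}$ for a surface, one has $\int_S\mathbf{H}^2\,\mathrm{dS}=4\int_S\mathrm{H}^2\,\mathrm{dS}=8\pi\,W(S)$, and together with $vol(S)=4\pi$ this collapses cleanly to $\lambda_1(S)\le -W(S)$. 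This identity $\lambda_1(S)\le -W(S)$ is the real crux of the argument: it converts a spectral bound into a purely geometric one.

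Finally I would invoke Theorem \ref{will}: for a genus-$0$ surface $W(S)\ge 2$, whence $\lambda_1(S)\le -W(S)\le -2=\lambda_1^{0}$, which is exactly the asserted global maximality of $\lambda_1^{0}$ over genus-$0$ surfaces of the prescribed volume. For the rigidity, equality $\lambda_1(S)=-2$ forces $W(S)=2$, so the equality clause of Theorem \ref{will} makes $S$ a round sphere. There is no serious analytic obstacle; the only points demanding care are the mean-curvature normalization ($\mathbf{H}$ versus $\mathrm{H}$), the volume constraint $vol(S)=4\pi$, and the verification that the constant test function is optimal on $\mathbb{S}^2$ so that the inequality is attained there.
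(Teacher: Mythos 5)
Your proof is correct and follows the same core strategy as the paper: reduce everything to the inequality $\lambda_1(S)\le -W(S)$ and invoke Willmore's theorem. The only differences are cosmetic but worth noting. Where you obtain the key inequality by plugging the constant test function into the Rayleigh quotient, the paper instead takes the positive first eigenfunction $u$, divides the eigenvalue equation by $u$, integrates, and uses Green's formula to discard the term $-\int_S |\nabla u|^2/u^2\,\mathrm{dS}\le 0$; these are two derivations of the identical bound $\lambda_1\,vol(S)\le -\tfrac12\int_S\mathbf{H}^2\,\mathrm{dS}$. In the equality case your route is actually cleaner: you observe that $\lambda_1(S)=-2$ forces $W(S)=2$ and conclude directly from the rigidity clause of Theorem \ref{will}, whereas the paper deduces that $u$ is constant, hence that $\mathbf{H}$ is constant, and then appeals to Alexandrov's theorem --- an extra ingredient your argument does not need. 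The bookkeeping ($\mathbf{H}=2\mathrm{H}$, $\int_S\mathbf{H}^2\,\mathrm{dS}=8\pi W(S)$, $vol(S)=4\pi$, $\lambda_1^0=-2$) is all correct.
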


\begin{proof} Let $u\in H^2(S)$ be the first eigenfunction of $\mathcal{L}$ and $\lambda_1$ the related eigenvalue, that is,
\begin{equation}\label{S2}
-\Delta u-\displaystyle\frac{1}{2}\mathbf{H}^2u=\lambda_1u.
\end{equation}

Since $u$ is a first eigenfunction, we can to consider $u>0$ and by using the expression (\ref{S2}), follows

\begin{equation*}
	\lambda_1=-\displaystyle\frac{\Delta u}{u}-\displaystyle\frac{1}{2}\mathbf{H}^2.
\end{equation*}

Integrating over $f(S)$, where $f:S\rightarrow \mathbb{R}^3$ is an embedding,
\begin{equation*}
	\displaystyle\int_{h(S)}\lambda_1\mathrm{d}S=-\displaystyle \int_{h(S)}\displaystyle\frac{\Delta u}{u}\mathrm{d}S-\displaystyle\frac{1}{2}\displaystyle\int_{h(S)}\mathbf{H}^2\mathrm{d}S.
\end{equation*}
By Green's formule,

\begin{equation*}
	\lambda_1.vol(h(S))=-\displaystyle\int_{h(S)}\displaystyle\frac{|\nabla u|^2}{u^2}\mathrm{d}S-\displaystyle\frac{1}{2}\displaystyle \int_{h(S)}\mathbf{H}^2\mathrm{d}S.
\end{equation*}
But, $vol(h(S))=vol(\mathbb{S}^2)=4\pi$. So,
\begin{equation*}
	\lambda_1=-\displaystyle\frac{1}{4\pi}\displaystyle\int_ {h(S)}\displaystyle\frac{|\nabla u|^2}{u^2}\mathrm{d}S-\displaystyle\frac{1}{8\pi}\displaystyle\int_{h(s)}\mathbf{H}^2\mathrm{d}S.
\end{equation*}
Soon,
\begin{equation*}
	\lambda_1\leq-\displaystyle\frac{1}{8\pi}\displaystyle\int_{h(S)}\mathbf{H}^2\mathrm{d}S.
\end{equation*}
However, by Theorem \ref{will} 
\begin{equation*}
	\lambda_1\leq-\displaystyle\frac{1}{8\pi}\displaystyle\int_{h(S)}\mathbf{H}^2\mathrm{d}S=-\displaystyle\frac{1}{2\pi}\displaystyle\int_{h(S)}\mathrm{H}^2\mathrm{d}S=-W(h)\leq-2=\lambda^{0}_1.
\end{equation*}
Hence, $\lambda_1\leq\lambda^0_1$. And equality occurs if, and only if $\displaystyle\int_{h(S)}\frac{|\nabla u|^2}{u^2}=0\Leftrightarrow|\nabla u|=0\Leftrightarrow\text {u is constant}$. So,

$$
\lambda_1=-\frac{1}{2}\mathbf{H}^2\Leftrightarrow\mathbf{H}\;\text{is constant}.
$$
By Alexandrov's Theorem, we have to $S=\mathbb{S}^2$. This concludes the proof of the result.
\end{proof}

\subsection{A maximum property of $\mathbb{S}^n$}

Consider $M_t^n\subset\mathbb{R}^{n + 1}$, with $t\in(-\varepsilon,\varepsilon)$, a differentiable, oriented, closed hypersurface with the same volume of the sphere $\mathbb{S}^n$

\begin{theorem}
\label{SecondTheorem}
Consider the variation of the $X_t$ sphere given in (\ref{X_t}). For each $t\in(-\varepsilon,\varepsilon)$ we associate a linear differential operator in $H^2(M_t)$

\begin{equation}
\mathcal{L}_t=-\Delta-\displaystyle\frac{1}{n}\mathbf{H}^2,
\end{equation}
where $\Delta$ and $\mathbf{H}$ are, respectively, Laplacian and the mean curvature non-norma-lized related to the metric $\mathbf{g}_{ij}(t)$ induced at $M^n_t=X_t(\mathbb{S}^n)$. Let $u_t\in H^2(X_t(\mathbb{S}^n))$ the first eigenfunction of $\mathcal{L}_t$ and $\lambda_1^t$ the related eigenvalue, that is,
\begin{equation*}\label{TP2}
	\mathcal{L}_t(u_t)=-\Delta u_t-\frac{1}{n}\mathbf{H}^2u_t=\lambda_1^t u_t.
\end{equation*}
So the first eigenvalue $\lambda_1^t$ from the $\mathcal{L} _t$ operator attain a local maximum at $t=0$, that is, the first eigenvalue $\lambda^0_1$ from the $\mathcal{L}_0$ operator defined in $X_0(\mathbb{S}^n)=\mathbb{S}^n$ is maximum among the first eigenvalues of the hypersurfaces $X_t(\mathbb{S}^n)$, $t\in(-\varepsilon,\varepsilon)$.

\end{theorem}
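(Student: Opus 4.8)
The plan is to imitate the argument of Theorem~\ref{FirstTheorem} almost verbatim, replacing the global Willmore bound of Theorem~\ref{will} by its local analogue, Theorem~\ref{WillG}, which is exactly the ingredient that the constant-volume variation $X_t$ was constructed to supply. First I would fix $t\in(-\varepsilon,\varepsilon)$ and let $u_t>0$ be the first eigenfunction of $\mathcal{L}_t$; taking it positive is legitimate because $\mathcal{L}_t=-\Delta^t-\frac{1}{n}\mathbf{H}^2(t)$ is a Schr\"odinger operator on the closed manifold $M_t$, so its bottom eigenvalue is simple with a sign-definite eigenfunction. Dividing the eigenvalue equation by $u_t$ then gives the pointwise identity
$$
\lambda_1^t=-\frac{\Delta^t u_t}{u_t}-\frac{1}{n}\mathbf{H}^2(t),
$$
which serves as the starting point, just as in the two-dimensional case.

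Next I would integrate this identity over $M_t$ against $\mathrm{d}M_t$. Since $u_t$ is smooth and bounded away from zero on the compact $M_t$, Green's formula applies and turns $\int_{M_t}(\Delta^t u_t)/u_t\,\mathrm{d}M_t$ into $\int_{M_t}|\nabla^t u_t|^2/u_t^2\,\mathrm{d}M_t\ge 0$. Because $X_t$ is volume-preserving, $vol(M_t)=vol(\mathbb{S}^n)$, so dividing through by this constant yields
$$
\lambda_1^t=-\frac{1}{vol(\mathbb{S}^n)}\int_{M_t}\frac{|\nabla^t u_t|^2}{u_t^2}\,\mathrm{d}M_t-\frac{1}{n\,vol(\mathbb{S}^n)}\int_{M_t}\mathbf{H}^2(t)\,\mathrm{d}M_t.
$$
Discarding the first, nonpositive term leaves the clean bound $\lambda_1^t\le -\mathcal{W}(t)$, with $\mathcal{W}$ the functional of the previous section.

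Then I would apply Theorem~\ref{WillG}, which gives $\mathcal{W}(t)\ge n$ with equality at $t=0$; hence $\lambda_1^t\le -n$ for every $t$ near $0$. On the sphere itself $\mathbf{H}=n$ is constant, so $\mathcal{L}_0=-\Delta-n$, its first eigenfunction is the constant, and $\lambda_1^0=-n$ (consistently $\mathcal{W}(0)=n$). Combining, $\lambda_1^t\le -n=\lambda_1^0$, which is precisely the asserted local maximality at $t=0$. I would also observe that equality $\lambda_1^t=\lambda_1^0$ forces the gradient term to vanish, so $u_t$ is constant and $\mathbf{H}(t)$ realizes the Willmore minimum, whence $M_t$ is a translated sphere by the equality case of Theorem~\ref{WillG}.

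The main obstacle is essentially not located in this theorem at all: all the analytic weight sits in the second-variation spectral computation behind Theorem~\ref{WillG}, which I may assume. What remains to handle with care is comparatively minor, namely justifying the positivity and regularity of $u_t$ so that the pointwise division and the integration by parts are valid, and checking that the discarded gradient term genuinely carries the favorable (nonpositive) sign. Once these points are in place, the reduction to $-\mathcal{W}(t)$ and the invocation of the local Willmore inequality close the argument.
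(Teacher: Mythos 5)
Your proposal follows essentially the same route as the paper: divide the eigenvalue equation by the positive first eigenfunction, integrate, apply Green's formula and the volume-preserving property to obtain $\lambda_1^t\le-\mathcal{W}(t)$, and then invoke Theorem~\ref{WillG}. The only cosmetic difference is in the equality case, where the paper passes through constancy of $\mathbf{H}(t)$ and Alexandrov's theorem rather than citing the equality case of Theorem~\ref{WillG} directly.
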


\begin{proof}
Let $u_t\in H^2(X_t(\mathbb{S}^n))$ be the first eigenfunction of $\mathcal{L}_t$ and $\lambda_1^t$ the related eigenvalue. Since $u_t$ is a first eigenfunction, we can always consider it positive. So,

$$
-\Delta u_t-\frac{1}{n}\mathbf{H}^2u_t=\lambda_1^tu_t\;\;\;\Rightarrow\;\;\;\lambda_1^t=-\frac{\Delta u_t}{u_t}-\frac{1}{n}\mathbf{H}^2.
$$
Integrating at $M_t$ the above expression, we get
$$\lambda_1^tvol(M_t)=-\int_{M_t}\frac{\Delta u_t}{u_t}\,\mathrm{dM_t}-\frac{1}{n}\int_{M_t}\mathbf{H}^2\,\mathrm{dM_t}.
$$
Using the Green's formule and the $X_t$ variation volume-preserving of the sphere, it follows that
$$
\lambda_1^t=-\frac{1}{vol(\mathbb{S}^n)}\int_{M_t}\frac{|\mathrm{grad}u_t|^2}{u^2_t}\,\mathrm{dM_t}-\frac{1}{n\,vol(\mathbb{S}^n)}\int_{M_t}\mathbf{H}^2\,\mathrm{dM_t}.
$$
Since the first term of the above expression is always non-positive, we have to

\begin{equation*}\label{TP1}
	\lambda_1^t\leq-\frac{1}{n\,vol(\mathbb{S}^n)}\int_{M_t}\mathbf{H}^2\,\mathrm{dM_t}.
\end{equation*}
Also, by the Theorem \ref{WillG}
\begin{equation*}
	\lambda_1^t\leq-\displaystyle\frac{1}{nvol(\mathbb{S}^n)}\displaystyle\int_{M_t}\mathbf{H}^2\mathrm{dM_t}\leq\mathcal{W}(0)=-\frac{1}{nvol(\mathbb{S}^n)}\int_{\mathbb{S}^n}n^2\mathrm{d}\mathbb{S}^n=-n=\lambda^0_1.	
\end{equation*}
Therefore, $\lambda_1^t\leq\lambda^0_1$, for all $t\in(-\varepsilon,\varepsilon)$. Finally, $\lambda_1^t=\lambda^0_1$ if only if $M_t$ is the unit sphere.
In fact, if $\lambda^t_1=\lambda^0_1$ we have in (\ref{TP1}) the equality, that is,
$$
\int_{M_t}\displaystyle\frac{|\nabla^tu_t|^2}{u^2_t}\mathrm{dM_t}=0.
$$	
Soon, $u_t=constant$ and by (\ref{TP2}), we get
\begin{equation*}
	-\frac{1}{n}\mathbf{H}^2(t)(u_t)=\lambda_1^t(u_t)\;\;\;\Rightarrow\;\;\; \mathbf{H}(t)=\text{constant}.
\end{equation*}
Therefore, by Alexandrov's theorem it follows that $M_t=\mathbb{S}^n$. Consequently, we have that the first eigenvalue in the sphere is a local maximum. This completes the proof of the theorem.

\end{proof}

\bibliographystyle{amsplain}

\end{document}